\newtheorem{theorem}{Theorem}
\begin{document}
\title{A rapidly converging domain decomposition method for the 
Helmholtz equation}
\author{Christiaan C. Stolk}
\ead{C.C.Stolk@uva.nl}
\address{University of Amsterdam, 
Korteweg-de Vries Institute for Mathematics,
P.O.Box 94248, 1090 GE Amsterdam, The Netherlands}

\begin{abstract}
\noindent
A new domain decomposition method is introduced for the heterogeneous
2-D and 3-D Helmholtz equations. Transmission conditions based on
the perfectly matched layer (PML) are derived that avoid artificial
reflections and match incoming and outgoing waves at the subdomain
interfaces.  We focus on a subdivision of the rectangular domain into
many thin subdomains along one of the axes, in combination with a
certain ordering for solving the subdomain problems and a GMRES outer
iteration. 
When combined with multifrontal methods, the solver has near-linear
cost in examples, due to very small iteration numbers that are
essentially independent of problem size and number of subdomains.
It is to our knowledge only the second method with this property next
to the moving PML sweeping method.
\end{abstract}

\begin{keyword}
Helmholtz equation \sep
domain decomposition \sep 
perfectly matched layers \sep
high-frequency waves
\MSC[2010] 65N55, 65N22 
\end{keyword}

\maketitle

\section{Introduction}

In this paper we introduce a new domain decomposition method for the 
solution of the Helmholtz equation in two and three dimensions. 
To be specific we consider in 2-D
\begin{equation} \label{eq:helm1}
  - \partial_{xx}^2 u (x,y) - \partial_{yy}^2 u(x,y) 
  - k(x,y)^2 u(x,y) = f(x,y) ,
\end{equation}
where $k(x,y) = \frac{\omega}{c(x,y)}$, with $c(x,y)$ the wave speed.
The computational domain is assumed to be a rectangle that is truncated 
using the perfectly matched layer \cite{Berenger1994}.
We focus on solving the large linear systems resulting from discretization
with standard 5 or 7 point finite differences. 

We have two main findings. First, we have constructed new
transmission conditions. These are designed to ensure that
\begin{equation} \label{eq:conditions}
\parbox{13cm}{%
\begin{enumerate}[(i)]
\item
the boundary conditions at the subdomain interfaces are 
non-reflecting; 
\item
if $\Omega_{j-1}$ and $\Omega_j$ are neighboring subdomains then
the outgoing wave field from $\Omega_{j-1}$ equals the  
incoming wave field in $\Omega_j$ at the joint boundary and vice versa.
\end{enumerate}}
\end{equation}
This is achieved in a simple and accurate way using PML boundary layers
added to the subdomains and single layer potentials. See 
\cite{SchadleEtAl2007} for a related approach in the finite element 
discretization of the time harmonic Maxwell equations.

Our most remarkable finding concerns the situation where the domain is
split into many thin layers along one of the axes, say $J$ subdomains
numbered from 1 to $J$. Following \cite{EngquistYing2011} we will also
call these quasi 2-D subdomains.  Generally, an increase in the number of
subdomains leads to an increase in the number of iterations required
for convergence. Here we propose and study a method where {\em the number
of iterations is essentially independent of the number of subdomains}.

A necessary condition for this is that information can travel over the
entire domain (or at least an $O(1)$ part thereof) in one
iteration.  To achieve this we use a multiplicative method, where the
subdomains are first solved consecutively from $j=1$ to $j=J$, each
time using information from the solution of the neighboring,
previously solved subdomain, and then in the same way from $j=J$
downto $j=1$ using the residual as right hand side. In this way
information can travel all over the domain with only two solves per
subdomain. The procedure is used as a preconditioner for GMRES.

We studied numerically the convergence of the method for
different choices of the grid distance $h$ and the frequency $\omega$,
keeping $\omega h$ constant, and different numbers of subdomains.
In our examples the method converged rapidly, with generally less
than 10 iterations needed for reduction of the residual by
$10^{-6}$.  Moreover, the required number of iterations was essentially
independent of the size of the domain and the number of subdomains.

This is attractive in combination with the use of multifrontal methods
for the subdomain solves. Indeed, as was argued by Engquist and Ying 
\cite{EngquistYing2011}, in 3-D the set of quasi 2-D subproblems can 
be solved by multifrontal methods in $O(N \log N)$ time, with 
$O(N^{4/3})$ cost for the factorization, versus $O(N^{3/2})$ and $O(N^2)$ 
when the multifrontal method is applied directly to the 3-D system.  
The method therefore behaves near-linearly%
\footnote{meaning linearly if $\log N$ factors are ignored}.
It is the second such method we are aware of, in addition to the moving 
PML sweeping method, for which such observations were made 
in \cite{EngquistYing2011}.

Several things have to be kept in mind. 
First, we estimate, based on our examples, that the
thickness of the PML layers needs to increase with increasing $N$.  A
required growth of $O(\log N)$ is consistent with our data.  This
would lead to an additional factor $O(\log N)$ for the cost of the
solves and $O((\log N)^2)$ for the cost of the preparation. Secondly,
the method is only near-linear provided that solutions are required
for a sufficiently large number of right hand sides to recoup the cost
of the factorization.
Thirdly, because of the multiplicative way of domain decomposition,
the method is not in itself parallel. In section~\ref{sec:conclusions} 
two solutions for this problem are discussed. Finally, numerical
tests have shown that for cavities, the claimed results do not hold. 
(Indeed, the cavity problem is known to be especially difficult for
iterative methods because of the many near-zero eigenvalues.)

\subsection{The method and its context}

Next we discuss in more detail the ideas behind the method and some of the 
relevant literature.

To motivate our approach we recall the 1-D problem with $k=\text{constant}$,
see the review in \cite{CollinoGhanemiJoly2000} or 
\cite{Despres1991_Waves1991,Despres1990,ShaidurovOgorodnikov1991}.
Let $]0,L[$ be the domain. The differential equation and the Robin 
boundary conditions  read
\begin{align*}
- \partial^2_{xx} u(x) -k^2 u(x) = {}& f(x)
&&
  \text{for $0< x < L$},
\\
  \partial_x u + iku={} & 0 
&&
  \text{at $x=0$}
\\
  - \partial_x u + iku={} & 0 
&&
  \text{at $x=L$.}
\end{align*}
The Robin boundary conditions are exact non-reflecting boundary
conditions and ensure that there are no incoming waves at the boundaries.
We assume the domain is divided in $J$ subdomains 
$]b_{j-1},b_j[$ with
\[
  0 = b_0 < b_1 < \ldots < b_J = L .
\]
The original problem is then equivalent to $J$ subdomain problems 
with continuity conditions at the interfaces as follows 
\begin{align*}
  -\partial_{xx}^2  u^{(j)} - k^2 u^{(j)} = {}& f^{(j)} 
&& \text{for } x \in ]b_{j-1},b_j[
\\[1ex]
  \partial_x u^{(j)} + ik u^{(j)} ={}&
  \partial_x u^{(j-1)} + ik u^{(j-1)} 
&&\text{at } x=b_{j-1} 
\\
  - \partial_x u^{(j)} + ik u^{(j)} = {}&
  - \partial_x u^{(j+1)} + ik u^{(j+1)}
&&\text{at } x=b_{j} 
\end{align*}
(by convention $u^{(0)} = 0 = u^{(J+1)}$). 
These continuity conditions satisfy the property (\ref{eq:conditions}).
To obtain an iterative solution method, the right hand side of the
continuity conditions is taken from the previous iteration, i.e.\
a sequence $v_n^{(j)}$ is constructed, where $n$ is the iteration number
and $j$ the subdomain index according to
\begin{align} \label{eq:iterativemethod1D-1}
  -\partial_{xx}^2  v_n^{(j)} -k^2 v_n^{(j)} = {}& f^{(j)} 
&& \text{for } x\in ]b_{j-1},b_j[
\\  \label{eq:iterativemethod1D-2}
  \partial_x v_n^{(j)} + ik v_n^{(j)} ={}&
  \partial_x v_{n-1}^{(j-1)} + ik v_{n-1}^{(j-1)} ,
&& \text{at } x=b_{j-1}
\\ \label{eq:iterativemethod1D-3}
  - \partial_x v_n^{(j)} + ik v_n^{(j)} = {}& 
  - \partial_x v_{n-1}^{(j+1)} + ik v_{n-1}^{(j+1)}
&& \text{at } x=b_{j} .
\end{align}
This method is optimal in the sense that it converges in a finite
number, namely $J$, of iterations.
Indeed, recall that the solution for the 
problem $-\partial_{xx}^2 u(x) - k^2 u(x) = f(x)$ with
Robin boundary conditions
$  \partial_x u(0) + iku(0) = h_1$,
$- \partial_x u(L) + iku(L) = h_2$ 
is given by
\begin{equation} \label{eq:solutionformula1D}
  u(x) = \frac{i}{2k} \int_{0}^x e^{ik(x-s)} f(s) \, ds 
+ \frac{i}{2k} \int_x^L e^{-ik(x-s)} f(s) \, ds
+ \frac{e^{ikx}}{2ik} h_1
+ \frac{e^{-ik(x-L)}}{2ik} h_2 
\end{equation}
It follows by induction, starting from $v_0^{(j)}= 0$,
that $v_n^{(j)}$ satisfies
\[
  v_n^{(j)}(x) = \frac{i}{2k} \int_{A}^x e^{ik(x-s)} f(s) \, ds 
+ \frac{i}{2k} \int_x^B e^{-ik(x-s)} f(s) \, ds
\]
where $A = b_{\max(0,j-n)}$ and $B= b_{\min(J,j+n-1)}$. After $J$ steps,
$A = 0$ and $B=L$ for all $j \in \{1,\ldots,J\}$.

This work answers two questions about the iterative method
(\ref{eq:iterativemethod1D-1})-(\ref{eq:iterativemethod1D-3}).
The first question concerns the generalization of 
the transmission conditions to two 
and three dimensions. The Robin boundary conditions
then no longer satisfies the properties  (\ref{eq:conditions})
(see the argument around (\ref{eq:Fourier_transformed_PDE_2D}) below).
Several approaches have been proposed in the literature. 
First, the Robin boundary conditions can still be used as transmission 
conditions \cite{BenamouDespres1997}. 
Several authors have also considered optimized Robin transmission 
conditions \cite{GanderMagoulesNataf2002,GanderHalpernMagoules2007}.
A second possible approach involves operator valued Robin boundary
conditions \cite{NatafRogierdeSturler1994_Preprint} and ideas about 
numerical absorbing boundary conditions, e.g.\ \cite{EngquistMajda1977}. 
Pad\'e approximations for $\lambda$ in
(\ref{eq:perfectly_non_reflecting2}) (see below) can be used
to obtain numerical absorbing boundary and transmission conditions 
\cite{ChevalierNataf1998,BoubendirAntoineGeuzaine2012}.
In this paper we use PML boundary layers 
\cite{Berenger1994} to achieve (\ref{eq:conditions}).
Earlier work using domain decomposition with PML's is in
\cite{Toselli1998} and in \cite{SchadleEtAl2007} (cf.\
the discussion in section~\ref{sec:conclusions}).

The second question concerns the case of large $J$. 
In one iteration of 
(\ref{eq:iterativemethod1D-1})-(\ref{eq:iterativemethod1D-3})
information from one subdomain can only travel 
to its neighbors. The method therefore requires at least
$O(J)$ iterations to converge, hence $O(J^2)$ subdomain solves.
On the other hand, by using the multiplicative approach outlined
below, solving the subdomains consecutively, first
$j=1,2,\ldots, J$, and then $j=J,J-1, \ldots, 1$, information
can travel over the full domain in just $2$ solves per subdomain. Here 
we will follow this multiplicative approach.

As mentioned, the case of a large number of thin layers, say 
$k$ grid points thick, is of interest when the method is 
used in combination with multifrontal methods for the subdomain
solves.The computational cost of such a setup was analyzed by
Engquist and Ying \cite{EngquistYing2011}, using results
of \cite{George1973}.  Consider a cube with $n
\times n \times n$ gridpoints, hence $N =n ^3$.  The cost of a $LU$
decomposition of a subdomain of the form $n \times n \times k$ is
$O(k^3 n^3)$, while the cost of a backsubstition is $O(k^2 n^2 \log
n)$. Assuming $k = O(1)$, the total cost of the factorizations is
$O(N^{4/3})$, while the total cost per iteration is $O(N \log N)$.  
If the number of iterations depends weakly on problem size, as we see in
examples, then this method scales well. In the presence
of PML layers that have a thickness of $w_{\rm pml}$ grid points, 
a value $k \approx 4 w_{\rm pml}$ is optimal for the thickness of
the subdomains including PML layers, i.e.\ minimizes the cost of applying
one set of subdomain solves.
The details of our method will be explained in section~\ref{sec:themethod}.

The papers \cite{Erlangga2008,ErnstGander2011} provide 
a review of solution methods for the Helmholtz equation.
Some recent other work is given in  \cite{WangDeHoopXia2011,
BollhoferGroteSchenk2009}.

\subsection{Results}

Our first main result is a theoretical result, concerning the constant
coefficient problem on a strip. Assuming that the PML layers
perfectly reproduce the behavior of the solution on the unbounded
domain, the methods solves this problem in one iteration, i.e.\ in one
upward and one downward sequence of solves. We observe that the upward
and downward sequence of solves can in fact be performed simultaneously,
if the point where the the sequences cross is handled carefully.

The second main result is the good convergence behavior in numerical 
examples that was already mentioned in the first part of this
introduction. In addition a comparison with a double sweep method 
with Robin transmission conditions was made. For small $J$ this 
can be attractive, but this method did not have near-linear cost
like the PML-based method.

\subsection{Contents}

The paper is organized as follows. The next section explains in detail
our method. Section~\ref{sec:theoreticalresults} contains some theoretical 
results. Then in section~\ref{sec:numericalresults} the numerical examples
are discussed. We end the paper with a short discussion.

\section{The method}
\label{sec:themethod}

\subsection{Continuous formulation}

In this section we formulate our method in 2-D. The domain is assumed
to be a set of the form $\Omega = ]0,L[ \times ]0,1[$. It is 
straightforward to generalize this to rectangular domains of different size,
and to 3-D rectangular domains. 

The Helmholtz operator will be referred to as $A$, given away from the 
PML boundary layers by
\[
  A = -\partial^2_{xx} - \partial^2_{yy} - k(x,y)^2 .
\]
The operator in a PML layer at a boundary, say $x=\text{constant}$, is
obtained by replacing
\[
  \frac{\partial}{\partial x} 
\rightarrow
  \frac{1}{1+ i \frac{\sigma_x(x)}{\omega} }
  \frac{\partial}{\partial x} 
\]
where $\sigma_x = 0$ in the interior of the domain, and positive inside 
the PML layers \cite{ChewWeedon1994,Johnson_notespml}.

The domain is divided into $J$ subdomains along the $x$-axis. The interface
locations will be denoted by $x = b_j$, where
\[
  0=b_0 < \ldots < b_J = L
\]
The ``core'' subdomains, without additional PML layers, will be denoted 
by $D^{(j)} = ] b_{j-1}, b_j [ \times ]0,1[$. With PML layers added the 
notation $\Omega^{(j)}$ will be used. The latter sets are obtained by
padding the $D^{(j)}$ with PML layers of size $L_{\rm pml}$ at the 
internal boundaries, i.e.
\[
  \Omega^{(j)} = ] b_{j-1} - L_{\rm PML} (1-\delta_{j,1}),
b_j + L_{\rm PML} ( 1- \delta_{j,J}) [        \times ]0,1[ 
\]
On the domains $\Omega^{(j)}$, functions $k^{(j)}(x,y)$ are defined
that agree with $k$ on $D^{(j)}$, and are independent of $x$ and 
equal to $k$ at the boundary of the core subdomain inside the added 
PML layers, i.e.\
\[
  k^{(j)}(x,y) = 
\left\{ \begin{array}{ll}
  k(x,y)       & \text{for $b_{j-1} \le x \le b_j$} \\
  k(b_{j-1},y) & \text{for $x < b_{j-1}$ (if $j>1$)} \\
  k(b_j,y)    & \text{for $x > b_{j}$ (if $j<J$).}
\end{array} \right.
\]
On the domains $\Omega^{(j)}$ operators $A^{(j)}$ are defined as Helmholtz
operators with PML modifications, similar as $A$ was defined on $\Omega$.

Next we consider the approximation by domain decomposition of a solution
$u$ to the 2-D Helmholtz equation $A u = f$. The function $f$ is assumed
to be integrable, which allows the definition of $f^{(j)}$ on $\Omega^{(j)}$ 
by
\[
  f^{(j)} = \left\{\begin{array}{ll}
        f(x)  & \text{if $x \in D^{(j)}$}\\
        0     & \text{otherwise}
  \end{array}\right.
\]
A first set of subdomain solutions $v^{(j)}$ is obtained by solving the 
equations
\begin{equation} \label{eq:solve_upward}
  A^{(j)} v^{(j)}
  = f^{(j)} - 2 \delta(x-b_{j-1}) \partial_x v^{(j-1)}(b_{j-1},\cdot) ,
\end{equation}
consecutively for $j=1,2,\ldots,J$. Here by convention $v^{(0)} = 0$. 
A function $v$ on $\Omega$ is then defined by
\begin{equation} \label{eq:v_from_vj}
  v(x,y) = v^{(j)}(x,y) \qquad \text{with $j$ s.t.\ $b_{j-1} < x < b_j$.}
\end{equation}

The second term in the right hand side of (\ref{eq:solve_upward}) 
requires some explanation. While this is mostly done in the next 
section, a short intuitive explanation goes as follows.
The term $v^{(j-1)}(b_{j-1},\cdot)$ exclusively contains forward going waves 
because of the presence of a PML non-reflecting layer immediately to its right
in $\Omega^{(j)}$. The term 
$- 2 \delta(x-b_{j-1}) \partial_x v^{(j-1)}(b_{j-1},\cdot)$ is meant
to cause the same forward going wave field in the field $v^{(j)}$.
The form of this term can be explained by the properties of the single 
layer potential. The solution to
\[
  A u  = h(y) \delta(x-b_{j-1})
\]
has the property that 
\[
  \lim_{\epsilon \rightarrow 0}
  \partial_x u(b_{j-1}+\epsilon,y) 
  - \partial_x u(b_{j-1} - \epsilon,y)
  = - h(y) ,
\]
if $k$ is continuous at $x=b_{j-1}$.
Assuming the medium $k^{(j)}$ is independent of $x$, the source 
$h(y) \delta(x-b_{j-1})$ generates waves propagating both forwardly
and backwardly in a symmetric fashion. The factor $-2$ is introduced
so that the forward propagating part equals $v^{(j-1)}(b_{j-1},y)$.
The backward propagating part is absorbed in the neighboring PML layer.
Note that in this way, all the subdomain sources $f^{(k)}$ with
$k \le j$ can contribute to the field $v^{(j)}$.

The downward sequence of subdomain solves takes as right hand side 
the restrictions to a subdomain of the residual 
\[
  g = f - A v
\]
However, $v$ is undefined and generally discontinuous at the boundaries 
$x=b_j$, $j=1,\ldots, J-1$. While $g$ is still well defined, it only
exists as a generalized function (distribution), with most singular
term of the form $\delta'(x-b_j) h(y)$.

The problem with this is {\em not} that $g$ is unsuitable as a right
hand side. Solutions to Helmholtz equations with distributional
right hand sides in general exist. And, as a Helmholtz equation is 
formally an elliptic equation, the solutions are smooth away from the 
singular support of the right hand side. However, the restriction of 
$g$ to the subdomains $D^{(j)}$ is not well defined. Indeed, such 
a restriction is obtained by multiplying $g$ by the indicator function
$I_{D^{(j)}}$ of $D^{(j)}$, and this multiplication is in general
not well defined, because of the overlapping singular supports.

Therefore we introduce a {\em second set of domain boundaries}
\[
  0=\tilde{b}_0 < \tilde{b}_1 < \ldots < \tilde{b}_{\tilde{J}} = L .
\]
with $\tilde{b}_j \neq b_k$ for all $0 < j < \tilde{J}$ and 
$0 < k < J$. Similarly as above we defined sets $\tilde{D}^{(j)}$
and $\tilde{\Omega}^{(j)}$, by 
$\tilde{D}^{(j)} = ] \tilde{b}_{j-1}, \tilde{b}_j [ \times ]0,1[$,
and $\tilde{\Omega}^{(j)} = ] \tilde{b}_{j-1} - L_{\rm PML} (1-\delta_{j,1}),
\tilde{b}_j + L_{\rm PML} ( 1- \delta_{j,J}) [ \times ]0,1[$,
and we let $\tilde{A}^{(j)}$ be the Helmholtz operator
with PML modification on $\tilde{\Omega}^{(j)}$.
The function $g^{(j)}$ on $\tilde{\Omega}^{(j)}$ can now be defined by
\[
  g^{(j)}(x,y) = I_{\tilde{D}^{(j)}} g(x,y) .
\]

Next a series of functions $w^{(j)}$ on 
$\tilde{\Omega}^{(j)}$ is determined for $j = \tilde{J},
\tilde{J}-1, \ldots, 1$ (computed in this order) from the equations
\begin{equation} \label{eq:solve_downward}
  \tilde{A}^{(j)} w^{(j)}
  = g^{(j)} + 2 \delta(x-\tilde{b}^{(j)})
        \partial_x w^{(j+1)}(\tilde{b}_{j} , \cdot) ,
\end{equation}
and a function $w$ is defined by 
\[
  w(x,y) = w^{(j)}(x,y)
\]
where $j$ is such that $\tilde{b}_{j-1} < x < \tilde{b}_j$. The function
$w$ is in general undefined for $x = \tilde{b}_j$, $1 \le j \le \tilde{J}-1$, 
where it is discontinuous, but this is not a problem (we don't go into
detail regarding the regularity of the solutions in this work.)

The approximate solution to the Helmholtz equation is given by
$v + w$. We define $P$ to be the map $f \mapsto v+w$.
The map $P$ can be used as a left- or right preconditioner in an iterative
solution method like GMRES.

\subsection{Discrete formulation}

Discretization is done using finite differences. We focus on a
relatively simple scheme, using the standard second order
approximation to the Laplacian. Because the emphasis in this work is
on the convergence of the iterative method for the discrete system,
and on a proof of principle, questions related to the use of higher
order discretizations and the use of different schemes such as finite
elements are relegated to later work.

The grid distance is assumed to be equal in $x$ and $y$ directions
and is denoted by $h$. The grid size is denoted by $n_x \times n_y$.
In 2-D, the finite difference approximation to $A u$ is given by
\[
  (Au)_{i,j} 
=
  \frac{1}{h^2} \left( - u_{i-1,j} + 2 u_{i,j} - u_{i+1,j} \right)
  + \frac{1}{h^2} \left( - u_{i,j-1} + 2 u_{i,j} - u_{i,j+1} \right)
  - k_{i,j}^2 u_{i,j}
\]
In 3-D we use
\[
\begin{split}
  (Au)_{i,j,k} 
= {}&
  \frac{1}{h^2} \left( - u_{i-1,j,k} + 2 u_{i,j,k} - u_{i+1,j,k} \right)
  + \frac{1}{h^2} \left( - u_{i,j-1,k} + 2 u_{i,j,k} - u_{i,j+1,k} \right)
\\
{}&  + \frac{1}{h^2} \left( - u_{i,j,k-1} + 2 u_{i,j,k} - u_{i,j,k+1} \right)
  - k_{i,j,k}^2 u_{i,j,k}
\end{split}
\]
In the PML layers we use the approximation
\[
  \alpha_x \partial_x ( \alpha_x \partial_x u(x_i,y_j) )
=
  \alpha_x(x_i) \frac{ \alpha_x(x_{i+1/2}) \frac{u_{i+1,j} - u_{i,j} }{h}   
        - \alpha_x(x_{i-1/2}) \frac{u_{i,j} - u_{i-1,j}}{h} }{h}
\]
where $\alpha_x(x) = \frac{1}{1 + i \frac{\sigma_x(x)}{\omega}}$.
The subdomain boundaries are assumed to be at half grid points
$b_j = x_{\beta_j + 1/2}$. The discrete equivalent to the interval
$]b_{j-1},b_j[$ is therefore the set of points
$\{ x_{\beta_{j-1}+1} , \ldots, x_{\beta_j} \}$.

The use of two sets of subdomains, with two sets of $L DL^t$ factorizations
of the $A^{(j)}$ is not very attractive. Fortunately
it is not needed. After the first set of discrete subdomain 
boundaries $\beta_j$ is chosen, the second set is defined by
\[
\begin{split}
  \tilde{\beta}_0 = {}& \beta_0
\\
  \tilde{\beta}_J = {}& \beta_J
\\
  \tilde{\beta}_j = {}& \beta_j + 1
\qquad  \text{for  $j=1,\ldots, J-1$}.
\end{split}
\]
The domain for the operators $A^{(j)}$ is given by the grid
\begin{equation} \label{eq:grid_A_j}
\begin{split}
  &\{ x_{\beta_{j-1} + 1} , \ldots, x_{\tilde{\beta}} \}
   \times \{ y_1 ,\ldots, y_{n_y} \} \quad \text{ extended with 
   PML layers of thickness $w_{\rm pml}$}
\\
&\text{on the internal 
  boundaries.}
\end{split}
\end{equation}

Finally, we need to specify the derivative $\partial_x$ and the 
distribution $\delta(x - b_j)$ on the right
hand side of (\ref{eq:solve_upward}) and (\ref{eq:solve_downward})
We approximate derivative on a half-grid point by
\[
  \partial_x u(x_{j+1/2}) \approx \frac{1}{h} ( u_{j+1} - u_j) .
\]
The $\delta$ function is approximated by
\[
  \delta(x_l - x_{j+1/2}) \approx
\left\{ \begin{array}{ll} 
  \frac{1}{2h} & \text{if } |l-(j+1/2)| = 1/2 \\
  0            & \text{otherwise}
\end{array} \right.
\]

We generally aim that all subdomains have approximately the same size
in number of gridpoints. Since this size is given by
$(n_x + (J-1)(2 w_{\rm pml} + 1)) n_y$, we choose the $\beta_j$ such that
\begin{equation} \label{eq:define_beta_j}
  \beta_j \approx w_{\rm pml} + j \frac{n_x - 2 w_{\rm pml} - 1}{J}
\end{equation}

\subsection{Algorithm}

In the previous sections the operators $A$ and $P$ where 
specified. Our plan is to use GMRES for one of the following 
two equations, the right preconditioned system
\begin{equation} \label{eq:left_preconditioned}
  AP v = f , \qquad \qquad u = P v
\end{equation}
or the left preconditioned system
\begin{equation} \label{eq:right_preconditioned}
  PA u = Pf
\end{equation}
These appear to be systems of size $n_x n_y \times n_x n_y$,
but as is common in domain decomposition methods, a modification
of the problem to one involving only degrees of freedom near the 
boundary is possible at least for (\ref{eq:right_preconditioned}).

Indeed, the GMRES iteration of the right-preconditioned problem 
can straightforwardly be restricted to the $2(J-1)$ layers
of grid points at $x_{k,l}$, $k=\beta_j+1$ and $k=\beta_j+2$,
for $j=1,\ldots, J-1$. This is based on two observations. The first is 
that for any $f$, the residual $f - AP f$ is only non-zero
at grid points $x_{k,l}$ with $k = \beta_j+1$ or $k = \beta_j+2$,
for some $j$, $1 \le j \le J-1$.
The second is that the right hand side $f$ can easily be replaced
by a right hand side $\phi$ with the same property.
Namely, let $\tilde{f}^{(j)}$ be restriction of $f$ to the $x_{k,l}$ with
$k \in \{ \tilde{\beta}_{j-1}+1 ,\ldots, \tilde{\beta}_j \}$, and
$\tilde{u}^{(j)}$ be the solution to
\begin{equation} \label{eq:reduce_compute_ut}
  A^{(j)} \tilde{u}^{(j)} = \tilde{f}^{(j)}
\end{equation}
and set $\tilde{u}_{k,l} = \tilde{u}^{(j)}_{k,l}$
if $k \in \{ \tilde{\beta}_{j-1}+1 ,\ldots, \tilde{\beta}_j \}$.
Then as new right hand side the residual can be used
\[
  \phi = f - A \tilde{u} .
\]
Then, after solving $\psi$ from
\begin{equation} \label{eq:solve_A_reduced}
   A \psi = \phi
\end{equation}
using the right-preconditioned equation in the reduced space,
the solution of the original problem is obtained by taking
\[
  u = \psi + \tilde{u} .
\]

This concludes our outline of the method.
In Table~\ref{tab:list_of_algorithms} the main steps that can
be used in a computer implementation are outlined.

\newlength{\somelengthI}
\setlength{\somelengthI}{0.94\textwidth}
\begin{table}
\begin{center}
\framebox{\begin{minipage}{\somelengthI}
\newcommand{\mytab}{\hspace*{2.5mm}}
{\sc Algorithm 1}: Preparation\\
\mytab 
given $J,n_x$ determine subdomain boundaries from (\ref{eq:define_beta_j})\\
\mytab
create matrix of operators $A^{(j)}$ on subdomains given in 
(\ref{eq:grid_A_j})\\
\mytab
perform $L D L^t$ decomposition

\medskip

{\sc Algorithm 2}: Transform to have data only at boundaries\\
\mytab
solve $\tilde{u}^{(j)}$ and $\tilde{u}$ from (\ref{eq:reduce_compute_ut})\\
\mytab
output $\phi = f - A \tilde{u}$

\medskip

{\sc Algorithm 3}: Apply $AP$ to an input $f$\\
\mytab
for $j=2 , ... , J$, solve (\ref{eq:solve_upward}) \\
\mytab
compute the residual $g=f - A v$\\
\mytab
for $j=J-1,\ldots, 1$, solve (\ref{eq:solve_downward})\\
\mytab
compute the residual $h=g - A w$\\
\mytab
output $f-h$

\medskip

{\sc Algorithm 4}: Apply $P$ to an input $\psi$ and add $\tilde{u}$\\
\mytab
for $j=2 , ... , J$, solve (\ref{eq:solve_upward}) with $f=\psi$\\
\mytab
compute the residual $g=\psi - A v$\\
\mytab
for $j=J-1,\ldots, 1$, solve (\ref{eq:solve_downward})\\
\mytab
output $v+w + \tilde{u}^{(j)}$

\medskip

{\sc Algorithm 5}: Solve $A u = f$\\
\mytab
use algorithm 2 to compute $\phi$\\
\mytab
apply GMRES with $AP$ given by algorithm 3 to solve 
(\ref{eq:solve_A_reduced})\\
\mytab
use algorithm 4 to compute solution $u$ from $\psi$
\end{minipage}}
\end{center}
\caption{List of algorithms. Algorithms 1 and 5 form the top-level part of 
the program.}
\label{tab:list_of_algorithms}
\end{table}

\section{Theoretical results}
\label{sec:theoreticalresults}

\subsection{Multiplicative domain decomposition with upward and downward 
sweeps in 1-D}

Here we study our approach of using upward and downward sweeps of
subdomain solves for the 1-D problem. We establish that the constant
coefficient 1-D problem is solved in one step with this method. A
similar result holds when the upward and downward sequences of solves
are done concurrently. Note that these results are different from
those in \cite{NatafRogierdeSturler1994_Preprint}, even though similar
ideas are used in the proofs.

For the upward sweep, consider $v^{(j)}$ defined by
\begin{align} 
  -\partial_{xx}^2  v^{(j)} -k^2 v^{(j)} = {}& f^{(j)} 
&& \text{for } x\in ]b_{j-1},b_j[
\\ 
  \partial_x v^{(j)} + ik v^{(j)} ={}&
  \partial_x v^{(j-1)} + ik v^{(j-1)} ,
&& \text{at } x=b_{j-1}
\\ 
  - \partial_x v^{(j)} + ik v^{(j)} = {}& 0
&& \text{at } x=b_{j} .
\end{align}
Then by induction it follows that
\begin{equation} \label{eq:solution_v}
  v^{(j)}(x) = \frac{i}{2k} \int_0^x e^{ik(x-s)} f(s) \, ds
    + \frac{i}{2k} \int_x^{b_{j}} e^{-ik(x-s)} f(s) \, ds 
\end{equation}
for $b_{j-1} < x < b_j$. Indeed, if (\ref{eq:solution_v}) is satisfied
with $j$ replaced by $j-1$, it follows that
\begin{equation}
  \partial_x v^{(j-1)}(b_{j-1}) + ik v^{(j-1)}(b_{j-1})
  = \int_0^{b_{j-1}} e^{ik(b_{j-1} - s)} f(s) \, ds ,
\end{equation}
which together with (\ref{eq:solutionformula1D}) implies 
(\ref{eq:solution_v}).

Next let $U^{(j)}$ satisfy
\begin{align} 
  -\partial_{xx}^2  U^{(j)} -k^2 U^{(j)} = {}& f^{(j)} 
&& \text{for } x\in ]b_{j-1},b_j[
\\ 
  \partial_x U^{(j)} + ik U^{(j)} ={}&
  \partial_x v^{(j-1)} + ik v^{(j-1)} ,
&& \text{at } x=b_{j-1}
\\
  - \partial_x U^{(j)} + ik U^{(j)} = {}& 
  - \partial_x U^{(j+1)} + ik U^{(j+1)}
&& \text{at } x=b_{j} ,
\end{align}
obtained by setting $U^{(J)} = v^{(J)}$ and solving $U^{(j)}$ for 
$j=J-1,J-2\ldots, 1$ in that order. (This way of double
sweeping is slighly different from the one above.) Then 
by induction
\[
  - \partial_x U^{(j+1)}(b_j) + ik U^{(j+1)}(b_j)
  = \int_{b_j}^L e^{-ik(b_j-s)} f(s) \, ds 
\]
and for $b_{j-1} < x < b_j$
\[
  U^{(j)}(x) = \frac{i}{2k} \int_0^x e^{ik(x-s)} f(s) \, ds 
+ \frac{i}{2k} \int_x^L e^{-ik(x-s)} f(s) \, ds 
\]
i.e.\ the solution of the full problem.

\medskip

Next we discuss the case of an upward and a downward sweep that proceed 
concurrently. More precisely described it is the same method as in the 
introduction, but at iteration count $n$ only the functions $v^{(n)}_n$ and 
$v^{(J+1-n)}_n$ are updated.  Using the solution formula 
(\ref{eq:solutionformula1D}) again it can be verified
that $v_J(x)$, given by $v^{(j)}_J$ for $b_{j-1} < x < b_j$, is the
solution of the original problem.

\subsection{PML based transmission on the strip%
\label{ssec:theory_2D}}

Here we consider the problem with $k=\text{constant}$ on the strip
$]0,L[ \times ]0,1[$, with Dirichlet boundary conditions 
at $y=0$ and $y=1$ and PML boundary layers at $x=0$ and $x=L$.
In this section we assume that a PML boundary layer behaves like
a perfect non-reflecting boundary condition. 

The behavior of a perfect non-reflecting boundary is most easily
described in the Fourier domain. After a Fourier transform 
$u = \sum_l \sin(2 \pi l) \hat{u}_l(x)$,
$l=1,2,\ldots$, and writing $\hat{u}_l(x) =\hat{u}(x,\eta)$, $\eta =
2\pi l$, the Helmholtz equation becomes a family of ODE's that reads
\begin{equation} \label{eq:Fourier_transformed_PDE_2D}
  -\partial_{xx}^2 \hat{u} + \eta^2 \hat{u} - k^2 \hat{u} 
    = \hat{f}(x,\eta)
\end{equation}
We assume that $k \neq 2\pi l$ for all integers $l>0$.
The non-reflecting boundary condition becomes
\begin{align} \label{eq:perfectly_non_reflecting_bc1}
   \partial_x \hat{u} + \lambda \hat{u} = {}& h_1
&&
  \text{at $x=0$}
\\ \label{eq:perfectly_non_reflecting_bc2}
 - \partial_x \hat{u} + \lambda \hat{u} = {}& h_2
&&
  \text{at $x=L$,}
\end{align}
where $\lambda$ is given by
\begin{equation} \label{eq:perfectly_non_reflecting2}
 \lambda 
= \left\{ \begin{array}{ll}
  i \sqrt{k^2 - \eta^2} & \text{if } |\eta| < k \\
  - \sqrt{\eta^2 - k^2} & \text{if } |\eta| > k ,
\end{array}\right.
\end{equation}
and $h_1$ and $h_2$ are 0 for homogeneous non-reflecting boundary
conditions and non-zero if incoming waves are to be modeled.
(In the spatial domain, after inverse Fourier transform in $y$,
the factor $\lambda$ would become a pseudodifferential operator
that is non-local, explaining why in two and three dimension
we can not obtain the properties (i) and (ii) of the introduction
using Robin boundary conditions.)

We have the following result:

\begin{theorem}
In the situation just described, the map $P$ satisfies $A P f = f$.
\end{theorem}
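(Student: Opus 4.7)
The plan is to reduce to the 1-D sweep analysis already carried out in the previous subsection by Fourier expansion in $y$. Writing $f(x,y) = \sum_{l \ge 1} \sin(2\pi l y)\,\hat{f}_l(x)$ and similarly for all other functions, the Dirichlet conditions at $y=0,1$ decouple the problem into a family of 1-D problems on $]0,L[$ indexed by $\eta = 2\pi l$, each of the form (\ref{eq:Fourier_transformed_PDE_2D}), with effective wavenumber $\mu(\eta)$ chosen so that $-\mu^2 = \eta^2 - k^2$ and with the sign convention matching (\ref{eq:perfectly_non_reflecting2}). Under the standing assumption that the PML layers reproduce exact non-reflecting behavior, the operators $A$ and $A^{(j)}$, once restricted to the physical (non-PML) part, reduce mode-by-mode to 1-D Helmholtz operators equipped with the Robin conditions $\pm\partial_x \hat{u} + \lambda \hat{u} = h$ of (\ref{eq:perfectly_non_reflecting_bc1})--(\ref{eq:perfectly_non_reflecting_bc2}).

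Next I would translate the single-layer source term $-2\delta(x-b_{j-1})\partial_x v^{(j-1)}(b_{j-1},\cdot)$ into the equivalent 1-D data. For each Fourier mode, the upward-sweep equation (\ref{eq:solve_upward}) becomes
\begin{equation*}
  -\partial_{xx}^2 \hat{v}_l^{(j)} - \mu^2 \hat{v}_l^{(j)}
    = \hat{f}_l^{(j)} - 2\delta(x-b_{j-1})\,\partial_x \hat{v}_l^{(j-1)}(b_{j-1}),
\end{equation*}
together with non-reflecting conditions at both internal interfaces (obtained from the PML layers). Using the jump relation $[\partial_x \hat{v}_l^{(j)}]_{b_{j-1}} = 2\,\partial_x \hat{v}_l^{(j-1)}(b_{j-1})$ produced by the delta source, together with the fact that $\hat{v}_l^{(j-1)}(b_{j-1})$ is purely outgoing (by the outgoing Robin condition at the right end of $\Omega^{(j-1)}$), a short single-layer-potential calculation shows that this problem is equivalent to the Robin transmission problem
\begin{equation*}
  \partial_x \hat{v}_l^{(j)} + \lambda \hat{v}_l^{(j)}
  = \partial_x \hat{v}_l^{(j-1)} + \lambda \hat{v}_l^{(j-1)}
  \quad\text{at } x = b_{j-1},
\end{equation*}
with outgoing Robin condition at $x=b_j$. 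This is precisely the upward-sweep iteration analyzed in the 1-D subsection, applied with $k$ replaced by $\mu(\eta)$. The result proven there then gives an explicit formula for $\hat{v}_l^{(j)}$ as the restriction to $]b_{j-1},b_j[$ of the exact Helmholtz solution driven by $\hat{f}_l\,\mathbf{1}_{]0,b_j[}$.

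For the downward sweep I would argue analogously. Restricting to a single mode, the residual $\hat{g}_l = \hat{f}_l - A_l \hat{v}_l$ is supported as a distribution at the interfaces $x=\tilde{b}_j$ together with the remaining right-hand side; the same single-layer reduction shows that the downward-sweep equation (\ref{eq:solve_downward}) is equivalent, mode by mode, to the downward iteration studied in the 1-D subsection, which picks up exactly the contribution of the sources lying to the right of each subdomain. Adding the two results mode by mode, as in the $U^{(j)} = v^{(j)} + \text{(downward correction)}$ identity established in 1-D, yields $\hat{v}_l + \hat{w}_l = A_l^{-1} \hat{f}_l$ for every $l$, and summing over $l$ gives $AP f = f$.

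The main obstacle is the step identifying the effect of the delta-function single-layer source with the Robin transmission condition. One has to verify carefully that, in the presence of exact non-reflecting conditions at both internal boundaries of $\Omega^{(j)}$, the jump relation produced by $-2\delta(x-b_{j-1})\partial_x \hat{v}_l^{(j-1)}(b_{j-1})$ together with the purely outgoing character of $\hat{v}_l^{(j-1)}$ at $b_{j-1}$ reconstitutes the incoming Robin datum with the correct amplitude. This is precisely where the factor $-2$ and the symbol $\lambda$ from (\ref{eq:perfectly_non_reflecting2}) must match; once that jump calculation is in hand, the remainder is a mode-by-mode invocation of the 1-D sweep result.
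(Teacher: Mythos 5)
Your overall strategy is the same as the paper's: Fourier expansion in $y$, idealization of the PML as the exact non-reflecting condition with symbol $\lambda$ from (\ref{eq:perfectly_non_reflecting2}), and a mode-by-mode reduction to the 1-D sweeping computation. Your treatment of the forward sweep is sound: the jump relation for the single-layer source, combined with the facts that $\hat v^{(j-1)}$ is purely outgoing at $b_{j-1}$ (so $\partial_x\hat v^{(j-1)}=\lambda\hat v^{(j-1)}$ there) and that $\hat v^{(j)}$ is purely left-going just to the left of $b_{j-1}$ inside $\Omega^{(j)}$, does reproduce the Robin transmission condition with $ik$ replaced by $\lambda$; the paper performs the equivalent computation directly with the $\lambda$-solution formula, arriving at (\ref{eq:forward_sweep_strip1}). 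One small point you should make explicit: for evanescent modes $\lambda$ is real and negative, so ``$k$ replaced by $\mu(\eta)$'' means invoking the 1-D formulas with an imaginary wavenumber; they remain valid as algebraic identities, but the 1-D subsection is phrased for real constant $k$.

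The genuine gap is in the downward sweep. You appeal to ``the $U^{(j)} = v^{(j)} + (\text{downward correction})$ identity established in 1-D'', but no such identity is established there: the 1-D downward sweep defines $U^{(j)}$ directly through transmission conditions with right-hand side $f^{(j)}$, whereas the map $P$ defines $w^{(j)}$ by a residual-driven problem with right-hand side $g^{(j)} + 2\delta(x-\tilde b_j)\,\partial_x w^{(j+1)}(\tilde b_j,\cdot)$, $g=f-Av$. Identifying $\hat v + \hat w^{(j)}$ with the solution of the transmission-condition problem is exactly the work that remains: one must compute the Robin trace of $\hat v$ at the second-sweep interfaces, namely $(\partial_x+\lambda)\hat v(\tilde b_{j-1},\eta) = -\int_0^{\tilde b_{j-1}} e^{\lambda(\tilde b_{j-1}-s)}\hat f(s,\eta)\,ds$ (the paper's (\ref{eq:pf_robin_v1})), check that adding $\hat w^{(j)}$ does not disturb this condition (no sources of $w^{(j)}$ lie to the left of $\tilde b_{j-1}$ within $\tilde\Omega^{(j)}$), and verify the matching $(-\partial_x+\lambda)(\hat v+\hat w^{(j)}) = (-\partial_x+\lambda)(\hat v+\hat w^{(j+1)})$ at $\tilde b_j$, which closes an induction running from $j=\tilde J$ downward; this is the content of (\ref{eq:pf_induction_w2})--(\ref{eq:pf_induction_w3}). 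Note also that the singular part of the residual $g$ sits at the first-sweep interfaces $b_j$, not at $\tilde b_j$ as you state --- this is precisely why the second set of interfaces is introduced, so that $I_{\tilde{D}^{(j)}}g$ is well defined. None of this requires an idea beyond what you already use for the forward sweep, but as written the downward-sweep step rests on a citation to a statement the 1-D section does not contain.
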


\begin{proof}
The solution formula for 
(\ref{eq:Fourier_transformed_PDE_2D}-\ref{eq:perfectly_non_reflecting_bc2})
is given by
\[
\hat{u}(x,\eta) =
\frac{-1}{2\lambda} \int_0^x e^{\lambda (x-s)} \hat{f}(s,\eta) \, ds
  + \frac{-1}{2\lambda} \int_x^L e^{-\lambda(x-s)} \hat{f}(s,\eta) \, ds
  + \frac{e^{\lambda x}}{2\lambda} h_1
  + \frac{e^{-\lambda (x-L)}}{2\lambda} h_2
\]

First we consider the fields $v^{(j)}$, in other words the forward sweep.
Using induction, it easy to show that
\begin{equation} \label{eq:forward_sweep_strip1}
  \hat{v}^{(j)}(x,\eta) 
  =   \frac{-1}{2\lambda} \int_0^x e^{\lambda(x-s)} \hat{f}(s,\eta) \, ds
    + \frac{-1}{2\lambda} \int_x^{b_j} e^{-\lambda(x-s)} \hat{f}(s,\eta) \, ds .
\end{equation}
Indeed, assuming this is true with $j-1$ substituted for $j$ it follows 
that
\[
  \partial_x \hat{v}^{(j-1)} (b_{j-1},\eta) 
  = \frac{-1}{2} \int_0^x e^{\lambda (x-s)} \hat{f}(s,\eta) \, ds 
\]
The solution formula applied to right hand side
$\hat{f}^{(j)}(x,\eta) - 2 \delta(x-b_{j-1}) \partial_x 
\hat{v}^{(j-1)}(b_{j-1},\eta)$ 
then gives (\ref{eq:forward_sweep_strip1}).

Next we consider the backward sweep. The $w^{(j)}$ are solutions
to Helmholtz equations with as right hand side the residual $f - A v$ 
derived from the $v^{(j)}$. From (\ref{eq:forward_sweep_strip1}) it follows
that
\begin{equation} \label{eq:pf_robin_v1}
  (\partial_x + \lambda) \hat{v}(\tilde{b_j},\eta) 
  =  - \int_0^{\tilde{b}_j} e^{\lambda(x-s)} \hat{f}(s,\eta) \, ds .
\end{equation}
It follows that $\hat{v}+\hat{w}^{(j)}$ satisfies 
for $\tilde{b}_{j-1} < x < \tilde{b}_j$ the equations
\[
  - \partial^2_{xx} (\hat{v}+\hat{w}^{(j)}) + (\eta^2 - k^2)(\hat{v}+\hat{w}^{(j)}) 
   =  \hat{f}
\]
while at the boundaries of the interval
\begin{align} \label{eq:pf_induction_w2}
  (\partial_x + \lambda) (\hat{v}+\hat{w}^{(j)}) 
  = {}& - \int_0^{\tilde{b}_{j-1}} e^{\lambda(x-s)} \hat{f}(s,\eta) \, ds
&&
\text{at $x = \tilde{b}_{j-1}$}
\\ \label{eq:pf_induction_w3}
  (- \partial_x + \lambda) (\hat{v}+\hat{w}^{(j)}(\tilde{b}_{j}-0,\eta)) 
  = {}& (- \partial_x + \lambda) (\hat{v}+\hat{w}^{(j+1)}) 
&&
\text{at $x = \tilde{b}_{j}$}
\end{align}
Here $\hat{w}^{(j)}(\tilde{b}_{j}-0,\eta)$ denotes the limit
$\lim_{x \uparrow \tilde{b}_j} \hat{w}^{(j)}(x,\eta)$.
The first of these two equations follows easily from (\ref{eq:pf_robin_v1}),
while the second follows from the transmission condition.
Then by induction (\ref{eq:pf_induction_w3}) can also be written as
\[
  (- \partial_x + \lambda) (\hat{v}+\hat{w}^{(j)}(\tilde{b}_{j}-0,\eta)) 
  = - \int_{\tilde{b}_j}^L e^{-\lambda(x-s)} \hat{f}(s,\eta) \, ds 
\qquad\qquad
\text{at $x = \tilde{b}_{j}$}
\]
It follows that
\[
  \hat{v}(x,\eta) + \hat{w}^{(j)}(x,\eta)
  = 
\frac{-1}{2\lambda} \int_0^x e^{\lambda (x-s)} \hat{f}(s,\eta) \, ds
  + \frac{-1}{2\lambda} \int_x^L e^{-\lambda(x-s)} \hat{f}(s,\eta) \, ds
\]
for $\tilde{b}_{j-1} < x < \tilde{b}_j$, which completes the proof.
\end{proof}

\section{Numerical results}
\label{sec:numericalresults}

In this section we present examples in 2-D and in 3-D with constant
and variable $k$. We'll focus on the convergence of the method,
measured by the number of iterations for reduction of the residual by
a factor $10^{-6}$. 
After studying the method in its own right, we compare the method with
a method that combines classical Robin transmission conditions
with the double sweeping method presented here.

In our 2-D example we will vary the size of the
domain and the number of subdomains, keeping $h \omega$ constant. We 
will see that the number of iterations required is essentially
independent of those parameters.
In 3-D we take subdomains of constant thickness of 10 grid points,
excluding the PML layers. The number of subdomains is therefore
dictated by the size of the domain, and we study the convergence as a
function of domain size. Again the number of iterations is approximately
constant.  We
also study the influence on the parameter $w_{\rm pml}$ for constant
coefficient media. In our 3-D examples a value of $w_{\rm pml} = 4$ 
generally produced a good convergence. Nevertheless the parameter 
$w_{\rm pml}$ has some influence and some insight in this is obtained 
from the third example. The 3-D examples 
were done with domain sizes up to $(400)^3$.

Because of the size of the problems, the implementation was done under 
MPI. For the solution of the
linear systems on the subdomain the parallel sparse multifrontal
solver MUMPS \cite{AmestoyEtAl2001_MUMPS} was used.  In the version 
used to generate the 2-D examples the sequential sparse
multifrontal solver UMFPACK \cite{Davis2004_UMFPACK} was used.
The examples were run on the LISA linux cluster of the 
Stichting Academisch Rekencentrum Amsterdam (SARA).

The final part of this section concerns a comparison of PML-based and 
Robin transmission conditions. This is done in 2-D using a constant and 
a random medium. For these tests a Matlab implementation was used.

\subsection{Example 1: Marmousi}

Our first example is the Marmousi model, a synthetic model from
reflection seismology. In this model the velocity $c(x,y)$ varies 
between $1500$ and $5500$ ms${}^{-1}$. The model and a solution
to the Helmholtz equation are given in Figure~\ref{fig:marm_fig}.

Our first set of computations shows the number of iterations required 
for convergence as a function of grid size $h$ and the number of 
subdomains $J$. It is summarized in Table~\ref{tab:example1}.
The grid size varies between $h=1$ and $h=16$ m, and
the number of subdomains between 3 and 300. The frequency $\omega$ 
is chosen such that $h \omega$ is constant. The thickness of the PML 
layer is given by $w_{\rm pml} = 5$ except for the case with 300 subdomains 
which we simulated twice, with $w_{\rm pml}=5$ and $w_{\rm pml}=6$.

What stands out is that the convergence is very fast, with between
4 and 9 iterations required for reduction of the residual by 
$10^{-6}$. There is only a mild dependence on the grid size and
on the number of subdomains.
The dependence on $w_{\rm pml}$ and the somewhat larger number for 
300 subdomains with $w_{\rm pml}=5$ will be discussed below.

\begin{figure}[ht]
\begin{center}
(a)\\
\includegraphics[width=110mm]{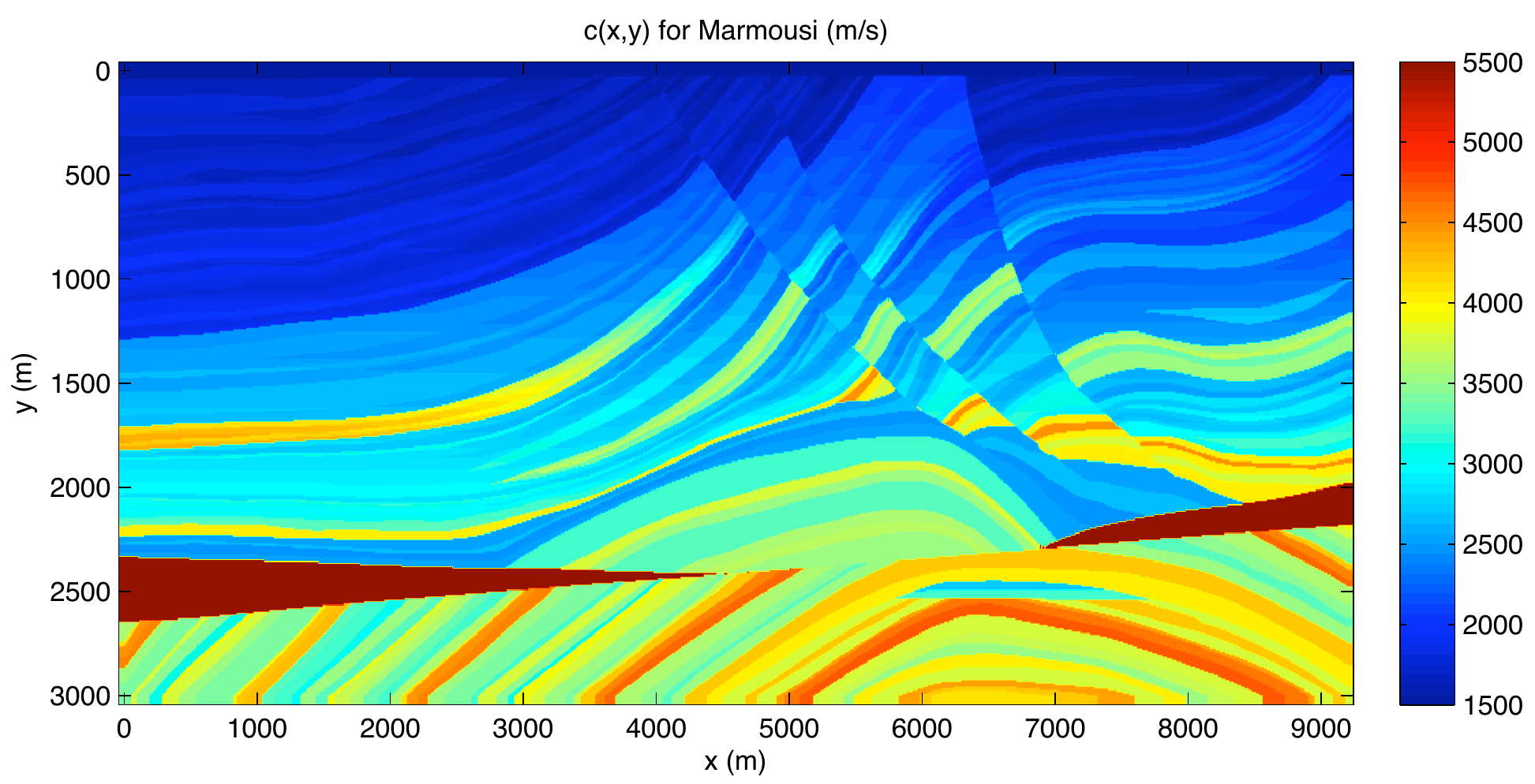}
\end{center}
\begin{center}
(b)\\
\includegraphics[width=110mm]{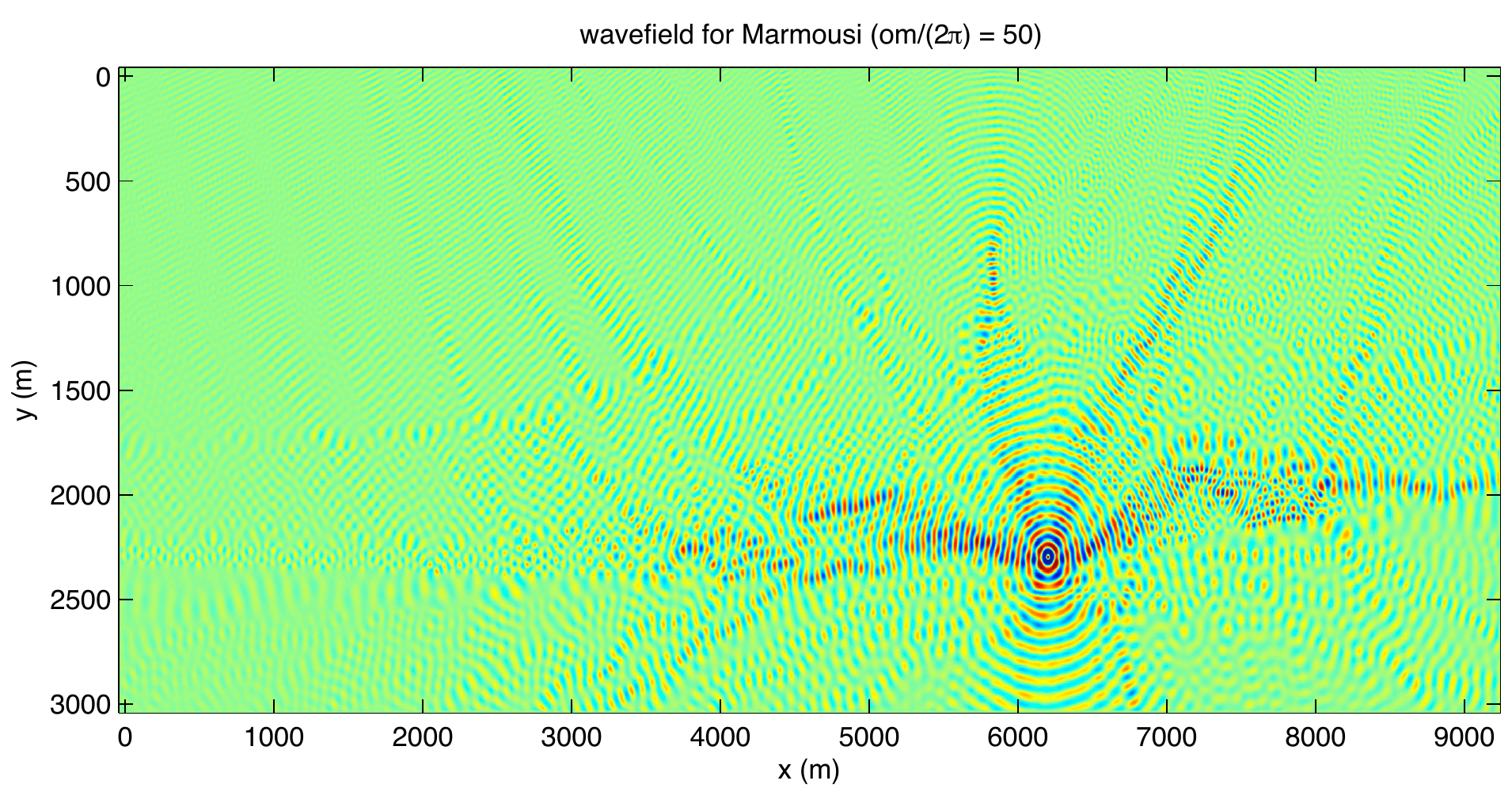}
\end{center}
\caption{Marmousi model and solution with $\frac{\omega}{2\pi} = 50$}
\label{fig:marm_fig}
\end{figure}

\begin{table}
\begin{center}
\begin{tabular}{|l|l|l||c|c|c|c|c|} \hline
\multirow{2}{*}{$N_x \times N_y$}
  & \multirow{2}{*}{$h$ (m)} 
  & \multirow{2}{*}{$\frac{\omega}{2 \pi}$ (Hz)}
  & \multicolumn{5}{c|}{Number of $x$-subdomains}
\\ \cline{4-8}
  & 
  & 
  & 3
  & 10 
  & 30
  & 100
  & 300
\\ \hline&&&&&&&\\[-2.41ex]\hline
$600 \times 212$ 
  & 16 
  & 12.5
  & 4
  & 5
  & 6
  &
  &
\\ \hline
$1175 \times 400$ 
  & 8
  & 25 
  & 5
  & 6
  & 7
  &
  &
\\ \hline
$2325 \times 775$
  & 4 
  & 50
  & 6
  & 6
  & 7 
  & 9
  & 
\\ \hline
$4625 \times 1525$
  & 2
  & 100
  & 6
  & 6 
  & 7 
  & 8 
  &
\\ \hline
$9225 \times 3025$
  & 1
  & 200
  & 
  & 7
  & 8
  & 9 
  & 13 (8) (*)
\\ \hline  
\end{tabular}\\
(*) $13$ was obtained for $w_{\rm pml}=5$, $8$ for $w_{\rm pml} = 6$.\\
\end{center}
\caption{Convergence results for example 1.
Displayed is the number of iterations for reduction of the 
residual by $10^{-6}$ as a function of the size of the domain
and the number of subdomains.}
\label{tab:example1}
\end{table}

\subsection{Example 2: A random medium in 3-D}

Our second example is random medium in 3-D. Plots of the medium and
a solution are given in Figure~\ref{fig:rand3_fig}. The size of the example
varied between $100^3$ and $400^3$ (excluding PML layers on the sides). 
In all cases the medium was divided in layers of thickness $10$ (excluding
again the PML layers). Experiments were performed with
$w_{\rm pml} = 4$ and $5$. The thickness of the subdomain on which the 
computation took place was hence 19 and 21 grid points respectively. 
The results are summarized in Table~\ref{tab:example2}.

The result are similar to those of the Marmousi examples. The iterative
method converged rapidly, in 6 to 8 iterations. In these examples the 
value $w_{\rm pml} = 4$ is sufficient.

\begin{figure}[ht]
\begin{center}
(a)\\
\includegraphics[width=88mm]{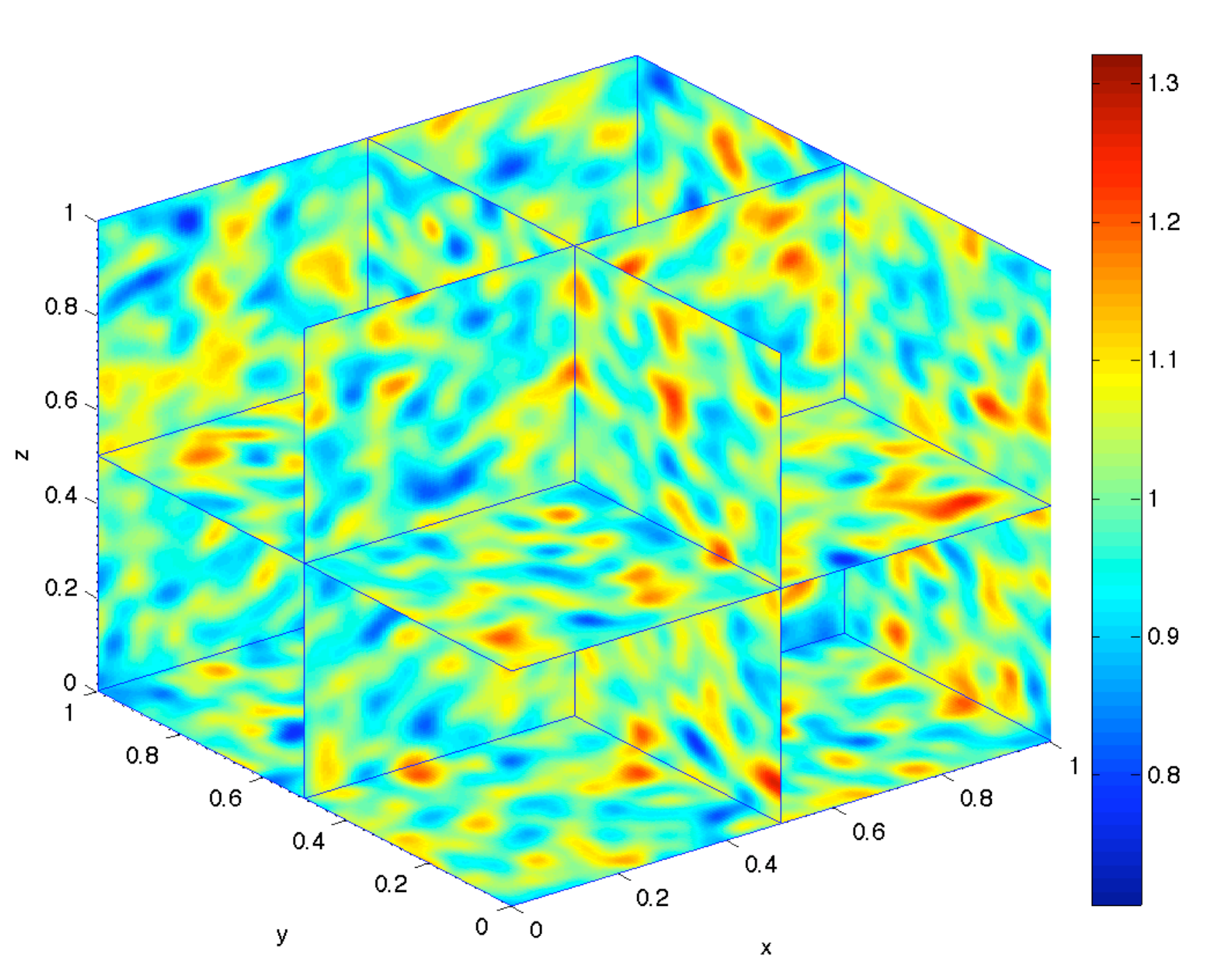}
\end{center}
\begin{center}
(b)\\
\includegraphics[width=80mm]{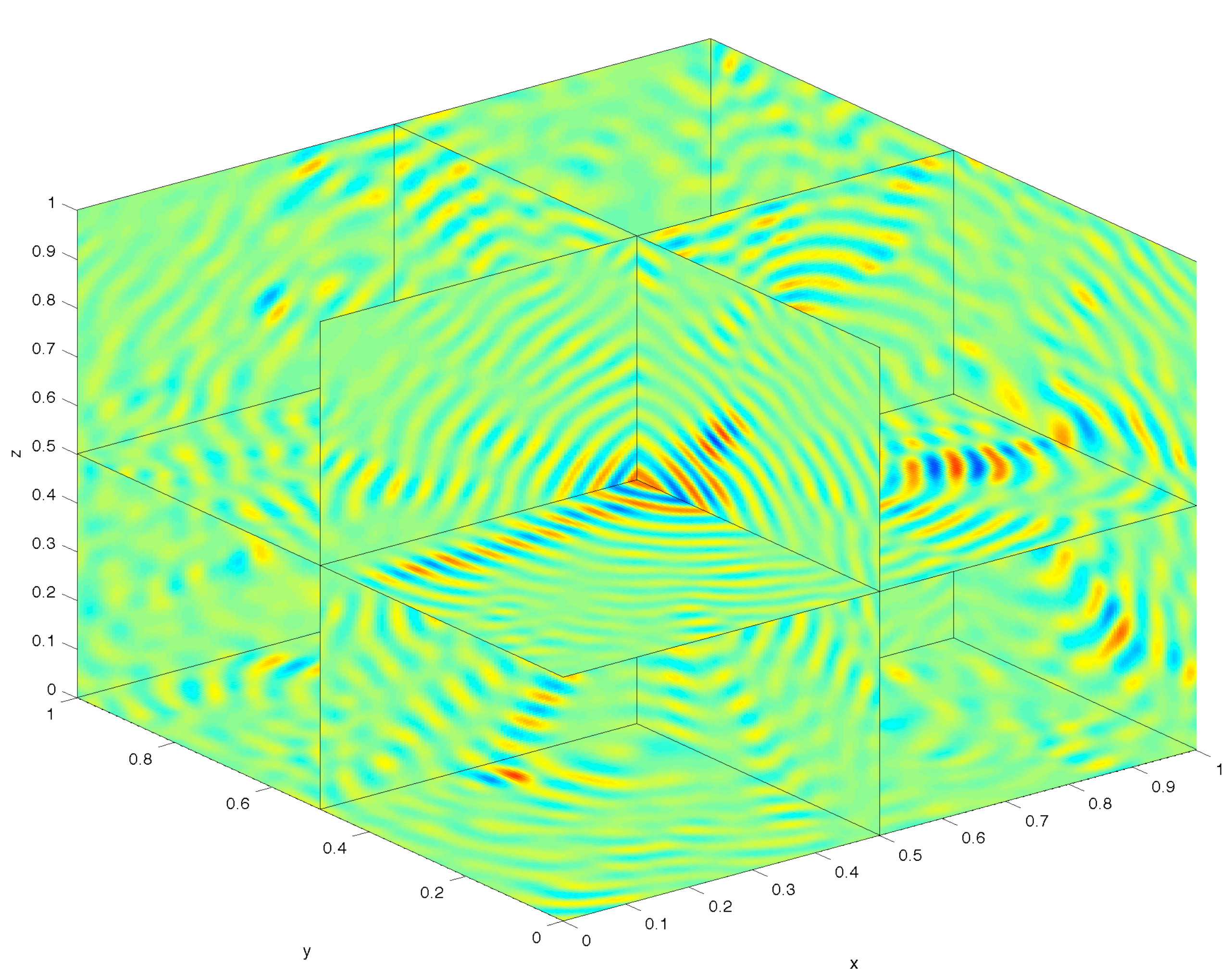}
\end{center}
\caption{(a) Random medium used in example 2; (b) solution to the Helmholtz
equation with a point source.}
\label{fig:rand3_fig}
\end{figure}

\begin{table}
\begin{center}
\begin{tabular}{|l|l|l|l||c|c|} \hline
\multirow{2}{*}{$n_x \times n_y \times n_z$}
  & \multirow{2}{*}{$h$} 
  & \multirow{2}{*}{$\frac{\omega}{2 \pi}$}
  & \multirow{2}{*}{$J$}
  & \multicolumn{2}{c|}{$w_{\rm pml}$}
\\ \cline{5-6}
  & 
  & 
  &
  & 4
  & 5
\\ \hline&&&&\\[-2.41ex]\hline
$100 \times 100 \times 100$
  & 0.01 
  & 10
  & 10
  & 6
  & 5
\\ \hline
$200 \times 200 \times 200$
  & 0.005
  & 20
  & 20
  & 6
  & 6
\\ \hline
$300 \times 300 \times 300$
  & 0.00333
  & 30
  & 30
  & 7
  & 6
\\ \hline
$400 \times 400 \times 400$
  & 0.0025
  & 40
  & 40
  & 8
  & 6
\\ \hline
\end{tabular}
\end{center}
\caption{Convergence results for example 2, a random medium in 3-D.}
\label{tab:example2}
\end{table}

\subsection{Example 3: A constant medium in 3-D and varying $w_{\rm pml}$}

In our third example we explore the dependence of the 
convergence on $w_{\rm pml}$. The main conclusion of the previous two
examples is that convergence is fast in all cases. Nevertheless, 
a increase in $w_{\rm pml}$ reduces the number of iterations somewhat
in the larger examples. 

In this example the domain is the unit cube, and the velocity $c =
1$. (A constant medium is attractive because it requires less
computational resources, due to the fact that for only one
subdomain the $L D L^t$ decomposition has to be computed.) The
subdomain size varies between $100^3$ and $400^3$ (excluding the outer
PML layers), while the thickness of the PML layers varies between 3
and 6 gridpoints. The frequency $\omega$ is chosen to correspond to 
10 grid points per wavelength. The results are given in 
Table~\ref{tab:example3}.


While we have limited data, still the following pattern can be observed.
For fixed $w_{\rm pml}$ the number of iterations increases with
the grid size. However the number of iterations can be kept more
or less constant if one can increases $w_{\rm pml}$ at the same time
as the grid size. Here $w_{\rm pml}$ goes roughly logarithmically with 
the grid size. 
\begin{table}
\begin{center}
\begin{tabular}{|l|l|l|l||c|c|c|c|} \hline
\multirow{2}{*}{$n_x \times n_y \times n_z$}
  & \multirow{2}{*}{$h$} 
  & \multirow{2}{*}{$\frac{\omega}{2 \pi}$}
  & \multirow{2}{*}{$J$}
  & \multicolumn{4}{c|}{$w_{\rm pml}$}
\\ \cline{5-8}
  & 
  & 
  &
  & 3
  & 4
  & 5
  & 6
\\ \hline&&&&&&\\[-2.41ex]\hline
$100 \times 100 \times 100$
  & 0.01 
  & 10
  & 10
  & 5
  & 4
  & 4
  & 3
\\ \hline
$200 \times 200 \times 200$
  & 0.005
  & 20
  & 20
  & 7
  & 5
  & 4
  & 4
\\ \hline
$400 \times 400 \times 400$
  & 0.0025
  & 40
  & 40
  & 10
  & 7
  & 5
  & 5
\\ \hline
\end{tabular}
\end{center}

\caption{Convergence results for example 3.
Displayed is the number of iterations for reduction of the 
residual by $10^{-6}$ as a function of domain size and $w_{\rm pml}$.}
\label{tab:example3}
\end{table}

\subsection{Comparison between Robin and PML-based transmission
  conditions}

Motivated by our results so far, we study a double sweep method 
with Robin transmission conditions. This appears to be a new 
combination even though Robin transmission conditions have 
been extensively studied. We will compare this with the method above.

Similarly as above, we introduce overlapping subintervals of the 
$x$-axis, here denoted by
$]l^{(j)},r^{(j)}[$, $j=1,\ldots,J$, with 
$r^{(j)} = l^{(j+1)} + m_{\rm overlap} h$, 
$m_{\rm overlap}$ denoting the overlap in gridpoints
(i.e.\ $l^{(j)} = b_j$ and $r^{(j)} = \tilde{b}^{(j+1)}$).
In 2-D, for a rectangular domain $]0,L[ \times ]0,L_y[$, the right
sweep with Robin transmission conditions amounts to solving the
boundary value problems
\begin{align*}
  -\partial_{xx}^2 v^{(j)} - \partial_{yy}^2 v^{(j)}
  - k(x)^2 v^{(j)} = {}& f^{(j)}
&& 
  \text{for $l^{(j)} < x < r^{(j)}$, $0<y<L_y$} 
\\
  \partial_x v^{(j)} + ik v^{(j)} = {}& \partial_x v^{(j-1)} + ik v^{(j-1)}
&&
  \text{at $x = l^{(j)}$, $0<y<L_y$}
\\
  - \partial_x v^{(j)} + ik v^{(j)} = {}& 0
&&
  \text{at $x = r^{(j)}$, $0<y<L_y$}
\end{align*}
for $j=1,\ldots, J$ consecutively, where $f^{(j)}(x,y) = f(x,y)$ for 
$l^{(j)} < x < l^{(j+1)}$ and zero elsewhere
and PML modifications are assumed to be present near all the
external boundaries. This results in a an approximate solution 
given by $v(x) = v^{(j)}(x)$ for $l^{(j)} < x < l^{(j+1)}$.
The left sweep uses the residual $g = f - Au$ as right hand side
and is otherwise a left-right reflection of the right sweep.
This algorithm was implemented in Matlab.

A choice in this algorithm was the overlap parameter 
$m_{\rm overlap}$. This parameter was set equal to 1, since a zero 
overlap resulted in significantly worse convergence and larger
overlaps did not significantly improve the convergence.

On the unit square tests were performed for a constant medium 
($c =1$) and a random medium displayed in Figure~\ref{fig:rand2d}. 
We chose $N_x$ ranging from 100 to 1600 and $N_y = N_x$. 
The layer thickness was set at 10 points. Because of the absence
of PML layers, the subdomain solves are roughly 4 times cheaper when
using Robin transmission conditions compared to PML based conditions.
 Iteration numbers for reduction of the residual by $10^{-6}$
are given in Tables~\ref{tab:example_comparison1}
and~\ref{tab:example_comparison2}.
\begin{figure}[ht]
\begin{center}
\includegraphics[width=80mm]{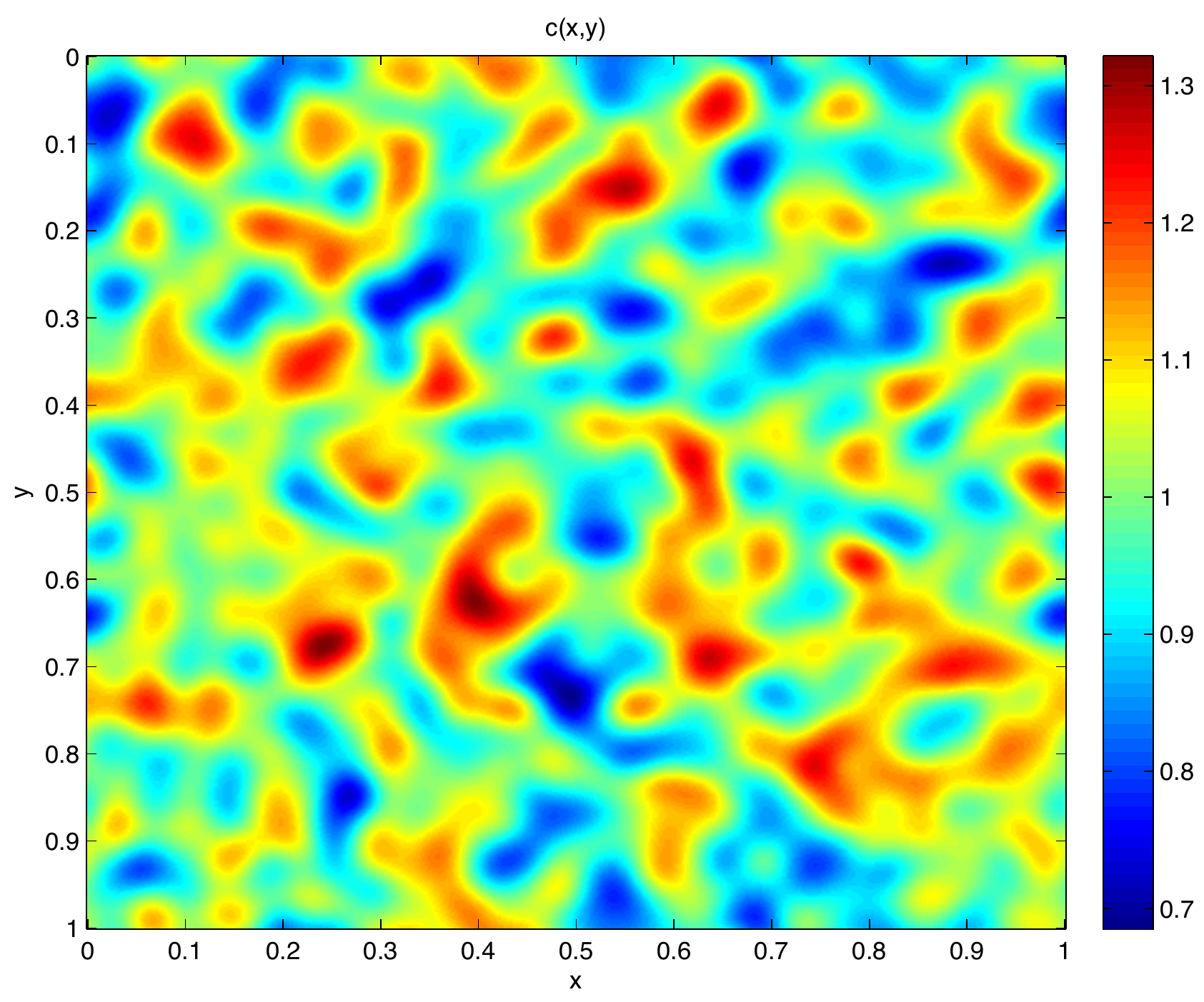}
\end{center}
\caption{Random medium used for the comparision of Robin and 
PML-based transmission conditions}
\label{fig:rand2d}
\end{figure}
\begin{table}[ht]
\begin{center}
\begin{tabular}{|l|l|l|l || c | c |} \hline
$N_x \times N_y$
  & $h$ 
  & $\frac{\omega}{2 \pi}$
  & $J$
  & PML
  & Robin
\\ \hline&&&&\\[-2.41ex]\hline
$100 \times 100$
  & 0.01 
  & 10
  & 10
  & 3
  & 9
\\ \hline
$200 \times 200$
  & 0.005
  & 20
  & 20
  & 4
  & 13
\\ \hline
$400 \times 400$
  & 0.0025
  & 40
  & 40
  & 4
  & 20
\\ \hline
$800 \times 800$
  & 0.00125
  & 80
  & 80
  & 5
  & 42
\\ \hline
$1600 \times 1600$
  & 0.000625
  & 160
  & 160
  & 7
  & 103
\\ \hline
\end{tabular}\\[1ex]
\end{center}
\caption{Comparison of convergence between Robin and PML-based 
transmission conditions for a constant medium.}
\label{tab:example_comparison1}
\end{table}
\begin{table}
\begin{center}
\begin{tabular}{|l|l|l|l || c | c |} \hline
$N_x \times N_y$
  & $h$ 
  & $\frac{\omega}{2 \pi}$
  & $J$
  & PML
  & Robin
\\ \hline&&&&\\[-2.41ex]\hline
$100 \times 100$
  & 0.01 
  & 7.14
  & 10
  & 7
  & 11
\\ \hline
$200 \times 200$
  & 0.005
  & 14.29
  & 20
  & 6
  & 14
\\ \hline
$400 \times 400$
  & 0.0025
  & 28.57
  & 40
  & 6
  & 20
\\ \hline
$800 \times 800$
  & 0.00125
  & 57.14
  & 80
  & 7
  & 34
\\ \hline
$1600 \times 1600$
  & 0.000625
  & 114.3
  & 160
  & 8
  & 74
\\ \hline
\end{tabular}
\end{center}
\caption{Comparison of convergence between Robin and PML-based 
transmission conditions for the random medium displayed in
Figure~\ref{fig:rand2d}.}
\label{tab:example_comparison2}
\end{table}

Two conclusions can be drawn.  First the method looks very
interesting, and certainly seems worthy of further study. On the other
hand the remarkable scaling of the PML-based transmission conditions
is not reproduced. With the Robin transmission conditions the
iteration numbers grow roughly linearly in $N_x$, or as $N^{1/2}$ in
2-D.  In 3-D this would lead to iteration numbers $O(N^{1/3})$.

We thank one of the 
anonymous reviewers for suggesting a comparison with Robin 
transmission conditions.

\section{Discussion}
\label{sec:conclusions}

A new domain decomposition method for the Helmholtz equation was presented.
It has remarkably fast convergence, even in the case of thin-layered 
subdomains. We have focussed on the use of this method with sparse
direct solvers on the subdomains. 

The method is related to that of Sch\"adle et al.\ in
\cite{SchadleEtAl2007}. In this reference, the authors consider finite
element methods for the time harmonic Maxwell equations on unbounded
domains truncated using the perfectly matched layer. A domain
decomposition method using PML-based interface conditions is derived
using a single sweep in each iteration. While the transmission term is
different from the one derived here, the difference is not very
relevant since its contribution propagates from the boundary $b_j$
directly into the PML layer, not entering the physical
domain. Numerical results are given for a 2-D example, using 2 or 3
subdomains, where in the second case the convergence is markedly
worse, probably due to the use of a single sweep. We conclude that
using a {\em double} sweep preconditioner is essential to obtain the
good convergence properties.

As pointed out in the introduction, the use of multiplicative domain
decomposition implies that the method is by nature sequential.  There
are basically two ways to obtain good parallel performance. One is the
parallellization of the $LDL^t$ factorization and backsubstitution
steps. Such an approach is described in
\cite{PoulsonEtAl2012_Preprint} for the sweeping preconditioner. This
is mostly a problem of parallel linear algebra, and not of domain
decomposition (although the distribution of the unknowns is relevant
for both parts of the story).  The second strategy is to divide the
subdomains over groups of processing nodes and perform the computation
for multiple right hand sides in a pipelined fashion. Because of the
setup time, the method is most relevant for the case with multiple
right hand sides anyway. (In other cases it probably makes more sense
to opt e.g.\ for the shifted Laplacian method).


The solutions to the time harmonic Maxwell equations and the time harmonic
linear elastic wave equation behave in many respects the same as those
of the Helmholtz equation. We expect that the techniques outlined
in this paper are applicable in those cases as well.

\section*{References}

\bibliographystyle{model1-num-names}
\bibliography{helmdd}

\end{document}